\algnewcommand\algorithmicinput{\textbf{Input:}}
\algnewcommand\Input{\item[\algorithmicinput]}
\algnewcommand\algorithmicoutput{\textbf{Output:}}
\algnewcommand\Output{\item[\algorithmicoutput]}
\newcolumntype{?}{!{\vrule width 1pt}}
\theoremstyle{remark}
\newtheorem{lemma}{Lemma}
\pgfplotsset{compat=1.14}
\begin{document}
\title{\bf \Huge Credible Interdiction for Transmission Systems}

\author{Kaarthik Sundar$^{*}$, Sidhant Misra$^{\dagger}$, Russell Bent$^{\dagger}$,  Feng Pan$^a$
\thanks{$^{*}$Information Systems \& Modeling, Los Alamos National
Laboratory, USA. E-mail: \texttt{kaarthik@lanl.gov}}\;
\thanks{$^{\dagger}$Applied Mathematics and Plasma Physics Division, Los Alamos National
Laboratory, USA. E-mail: \texttt{sidhant,rbent@lanl.gov}}\;
\thanks{$^a$Optimization and Control, Pacfic Northwest National Laboratory, USA.}\;
}
%\thanks{Manuscript received April 19, 2005; revised August 26, 2015.}}

\markboth{Journal of \LaTeX\ Class Files,~Vol.~14, No.~8, January~2020}%
{Sundar \MakeLowercase{\textit{et al.}}: Credible Interdiction for Transmission Systems}

\maketitle

\begin{abstract}
This paper presents novel formulations and algorithms for $N$-$k$ interdiction problems in transmission networks. In particular, it formulates spatial and topological resource constraints on attackers for $N$-$k$ interdiction problems and illustrates the formulation with two new classes of $N$-$k$ attacks: (i) Spatial $N$-$k$ attacks where the attack is constrained by geographic distance of a bus chosen by an attacker and (ii) Topological $N$-$k$ attacks where the attack is constrained to connected components. These two specific types of $N$-$k$ attacks compute interdiction plans designed to better model localized attacks, such as those induced by natural disasters or physical attacks. We then formulate these two resource-constrained interdiction problems as bilevel, max-min optimization problems and present a novel constraint generation algorithm to solve these formulations. Detailed case studies analyzing the behavior of spatially and topologically resource-constrained problems and comparing them to the traditional $N$-$k$ interdiction problem are also presented.
\end{abstract}

\begin{IEEEkeywords}
$N$-$k$ interdiction, power grids, Stackleberg game, coordinated attacks, penalty method
\end{IEEEkeywords}
\IEEEpeerreviewmaketitle

\section{Introduction} \label{sec:intro}
The electric power transmission grid plays an important and critical role in sustaining the socioeconomic systems that modern society depends on. Recent events, including natural disasters and intentional physical attacks on the grid, illustrate the need for methods that identify small sets of components whose failure leads to significant system impacts. One model that identifies such sets is the $N$-$k$ interdiction problem \cite{Salmeron2009}. The interdiction problem seeks to identify $k$ components in the system whose simultaneous or near-simultaneous failure causes the worst case disruption to the grid, where disruption is typically measured through load shed. This problem is often modeled as a bilevel, Stackelberg game (see \cite{Brown2006}) with an attacker and a defender. The attacker's and defender's actions are sequential and the attacker has a perfect model of how the defender will optimally respond to an attack. The objective of the attacker is to identify $k$ components whose loss maximizes the minimum load that a defender must shed in response to the simultaneous or near-simultaneous failure of those components. In this problem, $N$ and $k$ refer to the total number of components and the number of attacked components, respectively. The number of possible $N$-$k$ contingencies, even for small values of $k$, makes complete enumeration computationally intractable. 

One of the limitations of the $N-k$ interdiction problem is that many of the identified contingencies (especially for large $k$) are often unlikely to fail simultaneously. Indeed, a recent NERC report \cite{NercReport} discusses the value of using interdiction models to assess the resilience of transmission systems to  $N$-$k$ contingencies, however, the report stresses that the analysis should be restricted to \emph{credible} $N-k$. The report does not formally define credibility and this definition often depends on the scope of applications.  To address this gap, we develop a general approach for modeling definitions of credibility through constraints on the attacker in the interdiction problem. We demonstrate the flexibility of our approach with two models of credibility. One where the interdiction adversary is constrained spatially and one where the interdiction adversary is constrained topologically. 
These models encompass many practical situations such as coordinated physical attacks, hurricanes and earthquakes\footnote{We do not assume a natural event is intelligent. In this context, $k$ is used to model the expected number of components that fail during a natural event defined by a spatial footprint. The interdiction problem determines the components that could fail under that assumption and lead to the worst outcome}.
We then compare these worst-case-scenario models with the all powerful adversary described in the literature (referred to as the the \emph{traditional} interdiction model).
While we focus this paper on spatial and topological constraints, we stress that the techniques presented in this paper are general and are extendable to other formulations of credible interdiction in a wide variety of networked systems. Our second contribution stems from the observation that existing algorithms for solving interdiction problems are often computationally inefficient or lack optimality properties. We address this challenge by developing a novel constraint-generation algorithm that can solve these interdiction problems to optimality.

\subsection{Literature Review} \label{subsec:lit}
Over the last decade, considerable progress in modeling and developing algorithms to solve variants of the $N-k$ interdiction model in electric transmission systems and other critical infrastructure systems has been made. In this review, we focus on work developed for electric transmission systems and categorize the relevant work into three key area: (i) algorithms that 
solve the $N-k$ problem to global optimality (ii) scalable (heuristic) approaches, and (iii) attacker and defender capability models. 
%We also focus  attention on deterministic variants of the interdiction problem and refer interested readers to \cite{Sundar2018} for a detailed literature review of probabilistic variants. 
Throughout this article, the term attacker is used to refer to the entity disabling portions of the power system. The term defender is used to refer to the power system operator. 

\noindent \textbf{Global Solution Methods} 
In the literature, global solution methods are primarily restricted to linear models of electric power systems, i.e. the linearized DC power flow model.
The earliest work is by Salmeron et al. \cite{Salmeron2004}, which developed a bilevel formulation of the $N-k$ problem. A Benders decomposition algorithm based on Salmeron et al.'s formulation was developed in \cite{Alvarez2004} and tested on small instances. Later, reference \cite{Motto2005} developed an approach for solving \cite{Salmeron2004}'s formulation by replacing the inner problem with its dual. Reference \cite{Arroyo2005} developed a similar approach where KKT conditions for the inner problem were developed and used to solve the problem; both these references then use a Bender's decomposition algorithm to solve the reformulated problem. In general, most global methods use Salmeron et al.'s model and rely on decomposition methods, like Bender's, to solve the problem.

The Benders decomposition approach solves the bilevel interdiction problem by dualizing the inner problem and converting the problem into a single level problem. The standard Benders decomposition algorithm \cite{Salmeron2004} is then applied to the single level problem to obtain the optimal interdiction plan. This process introduces bilinear terms which are reformulated by introducing additional auxiliary variables. 
As  a result, the technique generally cannot solve large instances.  We refer the reader to references \cite{Bao2019,Ouyang2017b} for a detailed discussion on the scalability of Benders, albeit on easier to solve general network flow interdiction problems. Other decomposition approaches include those discussed in 
\cite{Ouyang2018,Ouyang2017c}.

In summary, global methods are generally based on linear models of power flow physics, adopt decomposition approaches, and are restricted to small problems. Like these approaches, our approach uses the linearized DC model and develops a decomposition algorithm. In contrast, our decomposition approach to solve the problem to optimality and also scales to considerably larger problems.

\noindent \textbf{Scalable Solution Methods} 
In the literature, scalable solution approaches have focused on heuristic methods that do not guarantee global optimality, approaches which reduce the problem to simpler variants, or both.
The first work to develop a computationally fast approach produced a cutting-plane algorithm. \cite{Salmeron2009}. This paper tested the algorithm on a ``U.S. Regional Grid'' with 5000 buses, 5000 lines, and 500 generators.
While scalable, the cuts used by the approach were not generally valid. It is possible for the cuts to remove feasible (optimal) solutions and converge to a sub optimal solution. 
%This work has also been leveraged to develop similar cutting-plane algorithms without convergence guarantees for the probabilistic variants of the interdiction problem in \cite{Sundar2018}. 
Heuristic approaches have also allowed the literature to consider more sophisticated, nonlinear models of power flows, such as the AC power flow model. See, for example, \cite{Kim2016} and \cite{Donde2008,Pinar2010} which used the AC power flow equations and approximations to these equations, respectively. Other examples of heuristic approaches include \cite{Bier2007} and \cite{Arroyo2013}.

More recently, \cite{Bienstock2010} developed computationally efficient algorithms to solve a minimum cardinality variant of the $N$-$k$ problem. Here, the objective is to find a minimum cardinality attack with a throughput less than a pre-specified bound. The approach adopts a custom cutting-plane algorithm that exploits the problem structure imposed by minimum cardinality. The paper also formulates and solves a nonlinear, continuous version of the problem where the attacker is allowed to change transmission line parameters to disrupt the system instead of simply removing transmission lines. The authors in \cite{Delgadillo2010} study the $N$-$k$ problem where the system operator is allowed to use both load shedding and line switching as defensive operations via a Benders decomposition algorithm. 

Other examples of problem simplification include contingency identification (see \cite{Enns1982, Eppstein2012, Davis2011, Soltan2016} and references therein). In particular, \cite{Eppstein2012} developed a heuristic approach to identify multiple contingencies that can initiate cascades on large transmission systems. Reference \cite{Davis2011} adopted current injection-based methods and ``line outage distribution factors'' to identify high consequence contingencies. This work and others like it use a variety of ``criticality'' measures for the different components in the system that aid in identifying these contingencies.  

In short, scalablity in the literature is achieved by focusing on heuristics or limiting features of the interdiction problem. In contrast, our approach develops one of the first, scalable global methods that does require simplification of the underlying problem beyond what is already adopted by global approaches. The mathematical foundation for our cutting-plane algorithm was initially developed by  \cite{Salmeron2009} and 
we develop the precise theoretical premise for why the original algorithm produces heuristic cuts which can remove the optimal solution. We develop the cut generation modification which allows us ensure the cuts that are generated are always valid. We also remark, that while we use the linearized DC power flow equations commonly used for global interdiction methods, our theoretical results naturally extend to convex relaxations of the AC power flow physics when strong duality holds.

\noindent \textbf{Models of Attacker and Defender Capabilities}
In the literature, to the best of our knowledge, the only paper that imposes deterministic restrictions on attacker capabilities is \cite{Ouyang2018,Ouyang2017c}. They develop a model where attacks are restricted a circular region and all components in the region are interdicted. This specific attacker model permits computing the damage in a simple manner using standard graph algorithms. We remark that the attacker's model in \cite{Ouyang2018,Ouyang2017c} differ from the $N$-$k$ interdiction model in the sense that the attacker interdicts all the components in the chosen circular region and this often leads to more that $k$ components being interdicted.
%We now present the main contributions of this article in the context of the literature work presented for the $N$-$k$ interdiction problem thus far. 
%Firstly, this work models the $N$-$k$ interdiction problems with spatial or topological resource restrictions. 
%To the best of our knowledge, the only work in the literature that models spatially constrained attackers is \cite{Ouyang2018}; this work differs from the current work in the sense that it imposes the spatial constraints on top of the traditional worst case $N$-$k$ constraints of the attacker. 
%Furthermore, in systems where the geographic information is not available, the topologically-constrained attacker serves as a good proxy for the spatially-constrained adversary. 
However, in general, there is limited literature where the credibility of attacks is precisely defined. The closest literature are those papers which have developed probabilistic and stochastic formulations of the $N-k$ interdiction problem, for example, \cite{Sundar2018}. The likelihood functions used in these models equate credibility with a probability model associated with simultaneous failure. 
In summary, the literature has just begun to address the open questions associated with defining credible attacks and one of our key contributions is a general framework for expressing restrictions on attach capabilities which are used to express credible interdictions.

% In particular, \cite{Ouyang2018} develops a formulation to solve a tri level interdiction model on power grids where the attacker can attack a spatially circular region in the grid and interdict all the components in the selected region; the authors also develop an algorithm and demonstrate its effectiveness on the IEEE-14 test system. We remark that the attacker's model in \cite{Ouyang2018,Ouyang2017c} differ from the $N$-$k$ interdiction model in the sense that the attacker interdicts all the components in the chosen circular region. This specific attacker model permits computing the damage in a simple manner using standard graph algorithms.

To close out our literature review, we remark that interdiction modeling has largely ignored the effects of transients and cascading outages. The only work in the literature that we are aware of which includes transients and cascading phenomena are \cite{Wang2014,Huang2018}. In particular, \cite{Wang2014} develops an effective simulation framework to simulate cascading outages after an interdiction and integrates this information into a cutting-plane algorithm for heuristically identifying worst-case $N-k$ interdictions. Though this paper does not explicitly modeling cascades and transients, our cutting-plane algorithm supports easy integration of the simulation framework developed in \cite{Wang2014}. Finally, recent work in \cite{Sundar2019pes} also suggests that steady-state $N$-$k$ interdiction problem solutions can be used as effective surrogates to identify a minimal set of interdiction plans that can lead to cascading failures making the current work important in its own right.  

\section{Problem Formulation} \label{sec:formulations}
This section presents a bilevel, mixed-integer linear program for a general $N$-$k$ interdiction problem and then demonstrates how the formulation is specialized to model credible restrictions like spatial and topological resource restrictions.
%topological connectivity and spatial proximity restrictions. 
For ease of exposition and \textcolor{black}{to retain the focus of this paper on the new contributions}, we assume that only transmission lines can be interdicted.
The formulations and algorithms can be extended to the general case where any component (i.e., generators, substations, etc.) in the transmission system can interdicted by using techniques like those discussed in \cite{Salmeron2009}.
%though the formulations and algorithms extend to the general case where any component in the transmission system can be interdicted. 
We first present the nomenclature and terminology that we use throughout the rest of the paper. Unless otherwise stated, all the values are in per-unit (pu). \\
\noindent \emph{Sets:} \\
$\mathcal N$ - set of buses (nodes) in the network \\
$\mathcal N(i)$ - set of nodes connected to bus $i$ by an edge \\
$\mathcal E$ - set of edges (lines) in the network \\
$\mathcal E$ - set of \emph{from} edges in the network \\
$\mathcal E^r$ - set of \emph{to} edges in the network \\
$\mathcal E(i)$ - set of edges in $\mathcal E$ connected to bus $i$ and oriented \emph{from} $i$ \\
$\mathcal E^r(i)$ - set of edges in $\mathcal E$ connected to bus $i$ and oriented \emph{to} $i$ \\
\noindent \emph{Variables:} \\
$\theta_i$ - phase angle at bus $i$ \\
$p^{g}_i$ - active power generated at bus $i$\\
$\ell_i$ - percent active power (load) shed at bus $i$ \\
$p_{ij}$ - active power flow on line $(i,j)$ \\
$x_{ij}$ - binary interdiction variable for line $(i,j)$\\
$\delta_{ij}$ - auxiliary flow variable for each line $(i, j)$ \\
$y_i$ - auxiliary binary variable for bus $i \in \mathcal N$ \\
$\bm x$ - vector of interdiction variables $x_{ij}$ \\
$x_i$ - binary variable for bus $i \in \mathcal N$, indicating if the bus is the center of the circular footprint for the spatial $N$-$k$ interdiction problem\\
\noindent \emph{Constants:} \\
$ \bm p^d_i$ - active power demand at bus $i$ \\
$ \bm X_{ij}$ - magnitude of reactance of line $(i, j)$ \\
$\bm b_{ij}$ - for line $(i, j)$ defined by $1/{\bm X_{ij}}$ \\
$(0,  \overline{\bm p}_i)$ - bounds for active power generated at bus $i$ \\
$\bm t_{ij}$ - thermal limit of line $(i,j)$ \\
$k$ - number of interdicted components \\
$D$ - planar distance limit \\
%\noindent \emph{Other notations:} \\
%$f^+ = \max\{0,f\}$, $f^- = -\min\{0, f\}$ \\
\noindent For a value $(\cdot)$, we use the notation $(\cdot)^+$ $(\cdot)^-$ as shorthand for $\max\{0,(\cdot)\}$ and $-\min\{0, (\cdot)\}$, respectively
Using the above set of notations, the $N$-$k$ interdiction problem is formulated as follows:
\begin{flalign}
(\mathcal F) \quad  \max_{\bm x \in X} \; \eta\, (\boldsymbol x) \label{eq:outer-obj}
\end{flalign}
where, 
\begin{subequations}
\begin{flalign}
& (\text{LS}(\boldsymbol x)) \quad \eta(\boldsymbol x) = \min \sum_{i \in \mathcal N}  \bm p_i^d \ell_i \quad \text{subject to:} \label{eq:inner-obj} & \\
& p_i^g - (1-\ell_i) \bm p_i^d = \sum_{(i,j) \in \mathcal E(i)} p_{ij} - \sum_{(j, i) \in \mathcal E^r(i)} p_{ji} \quad \forall i \in \mathcal N, & \label{eq:kcl} \\
& 0 \leqslant p^{g}_i \leqslant \overline{\bm p}_i  \quad \forall i \in \mathcal N, &\label{eq:pg} \\
& - \bm M x_{ij} \leqslant p_{ij} +  \bm b_{ij} (\theta_i - \theta_j) \leqslant \bm M x_{ij} \quad \forall (i,j) \in \mathcal E \label{eq:pij} & \\
&  -\bm  t_{ij}(1-x_{ij}) \leqslant p_{ij} \leqslant \bm  t_{ij}(1-x_{ij}) \quad \forall (i,j) \in \mathcal E \label{eq:thermal} & \\
&0 \leqslant \ell_i \leqslant 1 \quad \forall i \in \mathcal N. & \label{eq:loadshed}
\end{flalign}
\label{eq:inner}
\end{subequations}
\noindent and $\bm M = \sum_{i \in \mathcal N}  \bm p_i^d$.
For the traditional $N$-$k$ interdiction problem, the set $X$ is given by $\{\bm x :\sum_{(i,j) \in \mathcal E} x_{ij} = k \}$. Additional restrictions on the set $X$ enforce resource
%topological connectivity or spatial proximity 
restrictions on the $N$-$k$ interdiction plans. The inner problem is a linear program for a fixed interdiction plan. It formulates a minimum load shedding problem using the linearized DC power flow constraints. 
Eq. \eqref{eq:inner-obj} describes the load shedding objective of the defender. Constraints \eqref{eq:kcl} enforces flow balance at each node (Kirchoff's Law). Constraints \eqref{eq:pg} enforce limits on generators and constraints \eqref{eq:pij} relates flow on a line to the voltage phase angle difference across the line (Ohm's Law). Constraints \eqref{eq:thermal} enforces thermal limits on each power line.
 Constraints \eqref{eq:pij} and \eqref{eq:thermal} are functions of the interdiction variables $\bm x$
which activate or deactivate a line.
We remark that this model does not shed load that is co-located with a generator because the model only interdicts lines. To achieve this effect, buses can be split into separate generator and load buses with no geographic distance. Alternatively, bus interdiction variables can be included in the formulation to shed loads that are co-located with generators in the same bus (see \cite{Salmeron2009} for details). We next discuss a spatial and a topological example of modeling restrictions on $X$. 
%We also remark that the constraints \eqref{eq:pij} and \eqref{eq:thermal} can be equivalently represented using four linear inequality constraints making the inner problem a linear programming problem for a fixed $N$-$k$ interdiction plan. In the subsequent sections we present the formulations for the two interdiction models considered in this paper. 

\subsection{Spatial $N$-$k$ Interdiction} \label{subsec:spatial}
This model forces the choice of $k$ lines to fall within a geographic footprint.  Instances of this kind of an interdiction plan include natural disasters which are localized, localized cyber attacks, and physical attacks. To illustrate spatially constrained interdiction, in this formulation the attacker chooses
a bus $i$ such that the $k$ lines are within a planar distance of $\frac D2$ units from the bus $i$. This enforces the interdicted lines to be within a planar distance of $D$ units of one another. To formulate the problem, we use $\varphi_{ij}$ to denote the set of all buses within a planar distance, $\frac D2$, of line $(i,j)$.
%$\varphi_{ij}$ for each line $(i,j) \in \mathcal E$ denote the set of all buses that have the transmission line $(i,j)$ within a planar distance of $\frac D2$ units. 
Given $\varphi_{ij}$ for each line $(i,j)$, the set $X$ for spatial $N$-$k$ interdiction is modeled as:
\begin{subequations}
\begin{flalign}
&\sum_{(i,j) \in \mathcal E} x_{ij} \leqslant k, \,\, \sum_{i \in \mathcal N} x_i = 1 \label{eq:spatial_k}  \\
&x_{ij} \leqslant \sum_{n \in \varphi_{ij}} x_n \quad \forall (i,j) \in \mathcal E, & \label{eq:spatial_d} \\
& x_{ij} \in \{0, 1\} \; \forall (i,j) \in \mathcal E, \; x_i \in \{0,1\} \; \forall i \in \mathcal N. & 
\end{flalign}
\label{eq:spatial}
\end{subequations}
Constraints \eqref{eq:spatial_k} ensure (i) at most $k$ components are interdicted and (ii) exactly one bus is selected. Unlike the traditional interdiction, the spatial interdiction problem may not contain a feasible interdiction plan with exactly $k$ lines removed. This may occur when $D$ is very small and there are less than $k$ components of the transmission system in the region of impact. Without loss of generality, if the location of the event's bus is known, then the binary variable for that bus can explicitly be set to one. Finally, constraints \eqref{eq:spatial_d} ensure that all the interdicted components lie within the circle of radius $\frac D2$ units centered around the chosen bus. 
%Alternately, this condition can be restated as every interdicted component is within a distance of $D$ units from each other and within a distance of $\frac D2$ units from the chosen bus.
Fig. \ref{fig:illustration} illustrates the condition modeled by constraints \eqref{eq:spatial}. Here the impact is centered around the bus $5$. Hence, the value of $x_5$ is set to $1$. Then, constraint \eqref{eq:spatial_k} enforces all the other binary variables that correspond to each bus to be $0$ (as illustrated in the figure). Constraints \eqref{eq:spatial_d} ensure that the transmission lines whose center is within a distance of $\frac D2$ units from the bus $5$ (the impact region) are the only lines that can be interdicted. In Fig. \ref{fig:illustration}, they are the lines colored in red. In this set-up, the attacker can now choose at most $k$ lines from these lines that are within the circular impact region.

\begin{figure}
    \centering
    \includegraphics[scale=0.8]{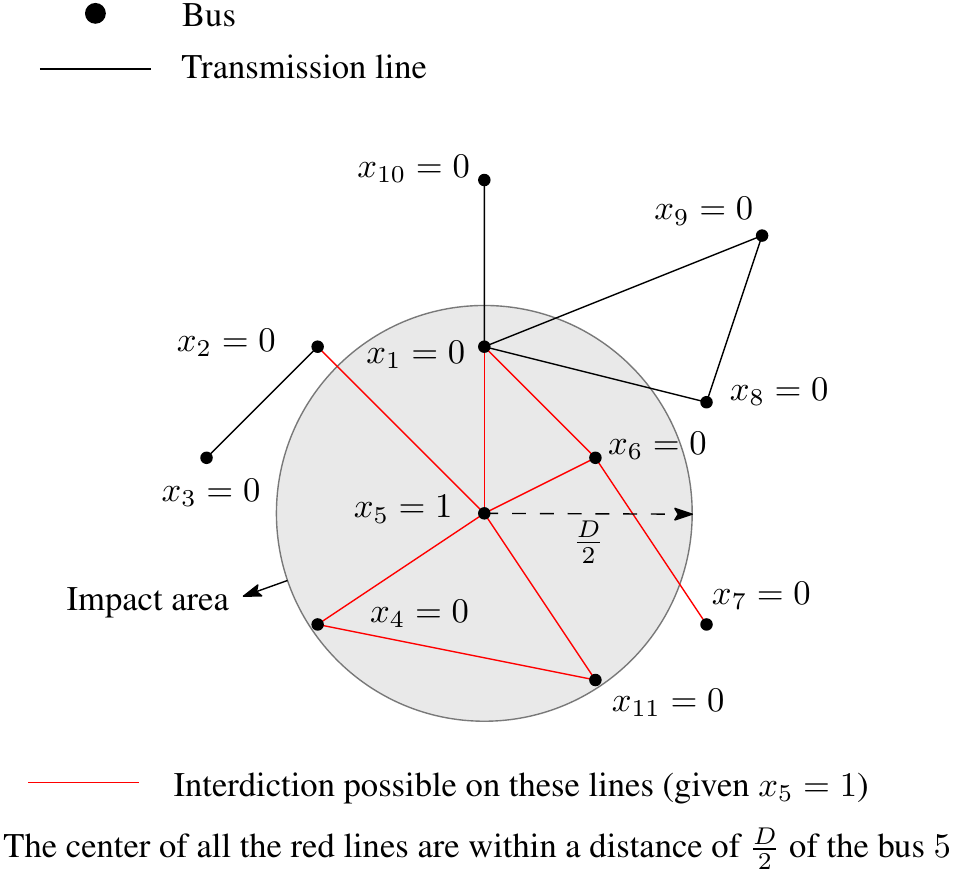}
    \caption{Illustration of the condition enforced by constraints \eqref{eq:spatial}.}
    \label{fig:illustration}
\end{figure}

\subsection{Topological $N$-$k$ Interdiction} \label{subsec:topological} 
The spatial $N$-$k$ interdiction limits the damaging event's region of impact to a geographic region and determines the worst-case $N$-$k$ attack within this footprint. But damaging events like tornadoes, hurricanes, etc. may not be circular or even regularly shaped. Furthermore, spatial data might not be available in many cases. Hence, as a proxy, the topological $N$-$k$ interdiction considers connected components, i.e., it computes a worst-case connected $N$-$k$ attack. To formulate the connectivity constraints, we use a single-commodity flow formulation \cite{Magnanti1995} by introducing a ``super sink'' bus ($\kappa$) that is connected to every bus in the network. 
This flow has no relationship to electrical flow. Rather, it is a virtual formulation that forces the attacker to choose connected components.
$\kappa$ serves as a source of $k+1$ units of virtual flow that are carried to all the buses in the network that are incident on at least one interdicted line. Flow variables, $\delta_{ij}$, for each $(i, j) \in \mathcal E$ determine the amount of flow carried on the interdicted edges. The set $X$ for the topological $N$-$k$ interdiction is modeled as follows:
\begin{subequations}
\begin{flalign}
& \sum_{(i,j) \in \mathcal E} x_{ij} = k, \,\, \sum_{i \in \mathcal N} x_{\kappa i} = 1,  \label{eq:top_k}  \\ 
& x_{ij} \leqslant y_i, \; x_{ij} \leqslant y_j \quad \forall (i,j) \in \mathcal E, \label{eq:top_aux} \\ 
& \delta_{\kappa i} \leqslant (k+1) \cdot x_{\kappa i} \quad \forall i \in \mathcal N, \label{eq:top_dummy_bd} \\
& \delta_{ij} \leqslant k \cdot x_{ij}, \,\, \delta_{ji} \leqslant k \cdot x_{ij} \quad \forall (i,j) \in \mathcal E, \label{eq:top_bd} \\ 
& \sum_{j \in \mathcal N(i)} \left(\delta_{ji} - \delta_{ij}\right) = y_i - \delta_{\kappa i} \quad \forall i \in \mathcal N, \label{eq:top_flow} \\  
& x_{ij} \in \{0, 1\} \quad \forall (i,j) \in \mathcal E, \\ 
& \delta_{ij} \geqslant 0 \quad \forall (i,j) \in \mathcal E\cup \mathcal E^r, \label{eq:deltadef1} \\ 
& \delta_{\kappa i} \geqslant 0 \quad \forall i \in \mathcal N, \\
& y_i \in \{0,1\} \; \forall i \in \mathcal N. & 
\end{flalign}
\label{eq:top}
\end{subequations}
Constraint \eqref{eq:top_k} ensures that the number of interdicted lines is $k$ and that a dummy edge originating from $\kappa$ is chosen to ensure flow is delivered to the buses incident on interdicted lines. Constraints \eqref{eq:top_aux} enforce the auxiliary binary variables $y_i$ and $y_j$ to take value $1$ if the line $(i,j)$ is interdicted. Constraints \eqref{eq:top_dummy_bd}--\eqref{eq:top_flow} are single-commodity flow constraints which ensure that the interdicted lines form a connected component. Intuitively, the single commodity flow formulation ensures that each bus incident on an interdicted line receives one unit of flow from $\kappa$. Figs. \ref{fig:illustration-top1} and \ref{fig:illustration-top2} illustrate the flow variables and the values they take for an $N$-$3$ attack. As indicated in Fig. \ref{fig:illustration-top1}, each transmission line $(i,j)$ in the original system has two non-negative flow variables $\delta_{ij}$ and $\delta_{ji}$ (see Eq. \eqref{eq:deltadef1}). The $\delta_{\kappa i}$ variable for each $i \in \mathcal N$ are also shown in Fig. \ref{fig:illustration-top1}. Given a value of $k$, the flow variables $\delta_{\kappa i}$ can carry at most $(k+1)$ units of flow depending on which dummy edge $x_{\kappa i}$ is selected by the formulation. Intuitively, the single-commodity flow constraints serve the purpose of ensuring that the interdicted lines form a connected component using the flow graph shown in Fig. \ref{fig:illustration-top1}. Fig. \ref{fig:illustration-top2} shows a feasible $N$-$3$ attack with $3$ interdicted lines and corresponding feasible flows. In this case, $4$ units of flow originate from the super-sink, $\kappa$, and one unit of flow is consumed by each bus in the set $\{1, 2, 3, 5\}$. The flow values satisfy the single-commodity flow constraints in \eqref{eq:top_dummy_bd}--\eqref{eq:top_flow}. 

\begin{figure}
    \centering
    \includegraphics[scale=0.8]{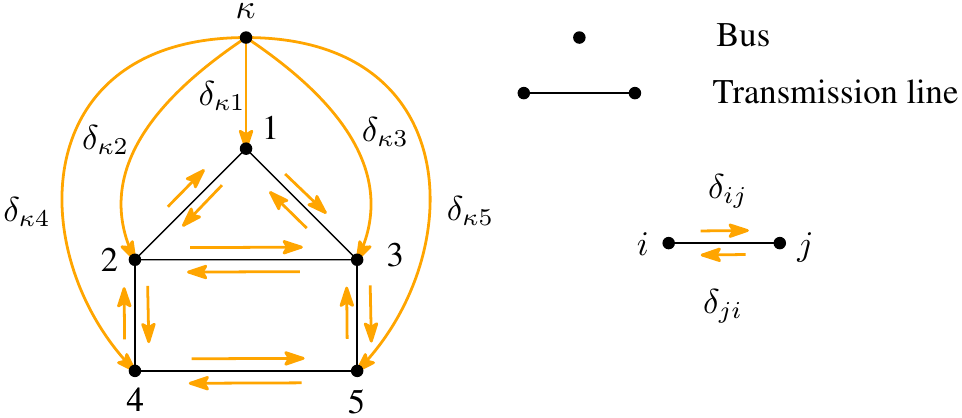}
    \caption{The flow variables that correspond to a given network and the super-sink bus $\kappa$ are shown.}
    \label{fig:illustration-top1}
\end{figure}

\begin{figure}
    \centering
    \includegraphics[scale=0.7]{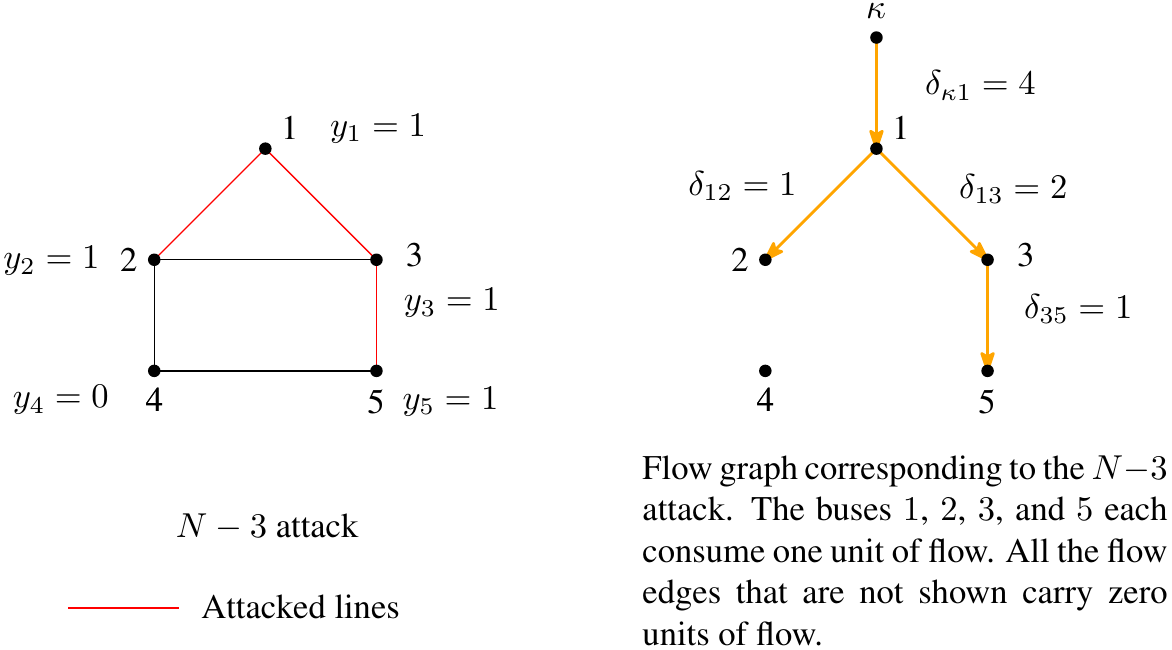}
    \caption{In this example, the $N$-$3$ attack is given by $x_{12}=1$, $x_{13}=1$, and $x_{35}=1$.}
    \label{fig:illustration-top2}
\end{figure}

Other credible resource constraints can be modeled by further adopting the framework on imposing constraints on the set $X$. In the next section, we develop a penalty-based constraint-generation algorithm to solve the bilevel problems to optimality. 

\section{Solution Methodology} \label{eq:algorithm} 
This section describes a novel constraint-generation algorithm that works directly with the bilevel structure of the interdiction problem 
and does not dualize the inner problem. We begin by first presenting a reformulation of the inner problem LS$(\bm x)$. 
\subsection{Reformulation of the interdiction problem}
The reformulation depends on the observation that LS$(\bm x)$ is feasible for all $\bm x \in X$. In this case, the operator can shed all load in the system to make the inner problem feasible, so this restriction is not artificial. The reformulation depends on the following property of linear programs.
\begin{lemma}
\label{lem:dual_bound}
Consider a linear program of the following form 
\begin{flalign}
\text{(P1) :} \quad \min \{ c \cdot z : A z \geqslant  b, z \geqslant 0 \} \label{eq:p1}
\end{flalign}
that has a finite optimum. Let $\alpha$ denote the vector of dual variables that correspond to the constraints $A z \geqslant b$. Let $\bar{\alpha}$ be a vector of upper bounds on the optimal dual variables ${\alpha}^*$. Consider the following optimization problem:
\begin{flalign}
\text{(P2) : } \quad \min \{ c \cdot z + \bar{\alpha} \cdot (b-Az)^+ : z \geqslant 0\} \label{eq:p2}
\end{flalign}
where, $(b-Az)^+$ is the positive part of $(b-Az)$, i.e., the vector of constraint violations of $Az \geqslant b$.
Then, the optimal values of (P1) and (P2) are identical.
\end{lemma}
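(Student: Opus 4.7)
The plan is to prove the two inequalities $\mathrm{opt}(\text{P2}) \leqslant \mathrm{opt}(\text{P1})$ and $\mathrm{opt}(\text{P2}) \geqslant \mathrm{opt}(\text{P1})$ separately, using LP duality and a sensitivity/perturbation argument for the hard direction. The first direction is immediate: any feasible $z$ for (P1) satisfies $Az \geqslant b$, hence $(b - Az)^+ = 0$, so the (P2) objective evaluated at $z$ reduces to $c \cdot z$; taking the optimal (P1) solution shows $\mathrm{opt}(\text{P2}) \leqslant \mathrm{opt}(\text{P1})$.

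For the reverse direction, I would take an arbitrary $z \geqslant 0$ and let $v \mathrel{:=} (b - Az)^+ \geqslant 0$. By construction, $Az \geqslant b - v$, so $z$ is feasible for the perturbed linear program $\min\{c \cdot z' : Az' \geqslant b - v,\ z' \geqslant 0\}$, which therefore has optimum at most $c \cdot z$. Now invoke LP duality on the perturbed problem: its dual is $\max\{(b - v) \cdot \alpha : A^\top \alpha \leqslant c,\ \alpha \geqslant 0\}$, and crucially, the dual feasible set is the same as that of the dual of (P1), so $\alpha^*$ (the optimal dual of (P1)) is feasible. Weak duality then gives
\begin{flalign*}
c \cdot z \;\geqslant\; (b - v) \cdot \alpha^* \;=\; b \cdot \alpha^* - v \cdot \alpha^* \;=\; \mathrm{opt}(\text{P1}) - v \cdot \alpha^*,
\end{flalign*}
where the last equality uses strong LP duality on (P1). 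Rearranging yields $c \cdot z + v \cdot \alpha^* \geqslant \mathrm{opt}(\text{P1})$. Since $\bar{\alpha} \geqslant \alpha^*$ componentwise and $v \geqslant 0$, we have $v \cdot \bar{\alpha} \geqslant v \cdot \alpha^*$, giving $c \cdot z + \bar{\alpha} \cdot (b - Az)^+ \geqslant \mathrm{opt}(\text{P1})$ for every $z \geqslant 0$. Minimizing the left-hand side over $z \geqslant 0$ gives $\mathrm{opt}(\text{P2}) \geqslant \mathrm{opt}(\text{P1})$, completing the proof.

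The main obstacle is really just identifying the right perturbation interpretation: once one sees $(b - Az)^+$ as the minimum relaxation making $z$ feasible, the result reduces to LP sensitivity, and the fact that the dual feasible region is independent of the right-hand side ensures $\alpha^*$ remains usable in the perturbed dual. I would also briefly note that $\mathrm{opt}(\text{P2})$ is attained (the objective is convex piecewise-linear, bounded below by $\mathrm{opt}(\text{P1})$, and goes to $+\infty$ along any direction of recession of $\{z \geqslant 0\}$ not captured by (P1)), but this is not strictly needed to equate the two optimal values.
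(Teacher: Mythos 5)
Your proof is correct, and it takes a genuinely different route from the paper's. The paper argues entirely on the dual side: it forms the dual (D1) of (P1), observes that appending the box constraint $\alpha \leqslant \bar{\alpha}$ does not change the dual optimum (since the optimal dual solution already satisfies it), and then dualizes the box-constrained problem back to the primal, identifying the resulting slack-penalized program with (P2). Your argument instead splits the claim into two inequalities and handles the hard direction with a primal perturbation: viewing $v = (b - Az)^+$ as the minimal right-hand-side relaxation making $z$ feasible, applying weak duality to the perturbed program with the fixed dual certificate $\alpha^*$, and using $\bar{\alpha} \geqslant \alpha^* \geqslant 0$ to dominate the correction term. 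Both hinge on strong duality for (P1), but yours avoids the one nontrivial step the paper leaves implicit --- verifying that the dual of the box-constrained dual really is (P2), which requires introducing slack variables $s \geqslant 0$ with $Az + s \geqslant b$ and arguing that $s = (b - Az)^+$ at optimality because $\bar{\alpha} \geqslant 0$. In exchange, the paper's double-dualization is more compact and delivers the identification of (P2) as an exact penalty reformulation in one stroke, whereas your pointwise bound $c \cdot z + \bar{\alpha}\cdot(b - Az)^+ \geqslant \mathrm{opt}(\text{P1})$ for every $z \geqslant 0$ is arguably more informative (it shows the penalized objective never dips below the true optimum anywhere, not just at the minimizer) and makes transparent that only the lower-bound direction uses the hypothesis on $\bar{\alpha}$. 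Your closing remark on attainment is a nice touch but, as you note, unnecessary since the optimal solution of (P1) already attains the common value in (P2).
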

\begin{proof}
Let $y^*$ denote the optimal solution value of the LP in (P1). The dual of (P1) is 
\begin{flalign}
\text{(D1) : } \quad y^* = \max \{ b \cdot \alpha : \alpha A \leqslant c, \alpha \geqslant 0 \}. \label{eq:dual}
\end{flalign}
Strong duality for LPs result in the optimal solution value of (D1) being equal to $y^*$. From the statement of the Lemma, we add the constraint $\alpha \leqslant \bar{\alpha}$ without changing the optimal solution value to (D1) to obtain 
\begin{flalign}
\text{(D1a) : } \quad y^* = \max \{ b \cdot \alpha : \alpha A \leqslant c, \alpha \leqslant \bar{\alpha}, \alpha \geqslant 0 \}. \label{eq:dual-1}
\end{flalign}
We now observe that the dual of the LP in (D1a) yields (P2) which results in the optimal solution value of (P2) being equal to $y^*$.  
\end{proof}
We remark that though the above Lemma is presented for any LP in its canonical form, it can be applied to any arbitrary LP after converting it to the form given in Eq. \eqref{eq:p1}. 

For a fixed interdiction plan $\bm x \in X$, we let $(\mu_{ij}^1(\bm x), \mu_{ij}^2(\bm x))$ and $(\pi_{ij}^1(\bm x),\pi_{ij}^2(\bm x))$  denote the optimal dual variables for constraints \eqref{eq:pij} and \eqref{eq:thermal} (after converting them to the canonical form given by Eq. \eqref{eq:p1}), respectively.  The notation $(\bar \mu_{ij}^1(\bm x), \bar \mu_{ij}^2(\bm x))$ and $(\bar \pi_{ij}^1(\bm x),\bar \pi_{ij}^2(\bm x))$ denotes the upper bounds of the dual variables over each $\bm x \in X$. For the sake of clarity, the constraints in their canonical forms, the dual variables and the corresponding upper bounds on the optimal dual variables are shown below.
\begin{subequations}
\begin{flalign}
p_{ij} + \bm b_{ij} (\theta_i - \theta_j)\geqslant - \bm M x_{ij} ~~ &\rightarrow ~~ (\mu_{ij}^1(\bm x), \bar \mu_{ij}^1(\bm x)) \label{eq:pij-d1} \\
-p_{ij} - \bm b_{ij} (\theta_i - \theta_j) \geqslant - \bm M x_{ij} ~~ &\rightarrow ~~ (\mu_{ij}^2(\bm x), \bar \mu_{ij}^2(\bm x) \label{eq:pij-d2} \\
p_{ij} \geqslant -\bm  t_{ij}(1-x_{ij}) ~~ &\rightarrow ~~ (\pi_{ij}^1(\bm x), \bar \pi_{ij}^1(\bm x)) \label{eq:thermal-d1} \\ 
-p_{ij} \geqslant - \bm t_{ij}(1-x_{ij}) ~~ &\rightarrow ~~ (\pi_{ij}^2(\bm x), \bar \pi_{ij}^2(\bm x)) \label{eq:thermal-d2} 
\end{flalign}
\label{eq:dual-bounds}
\end{subequations}
Notice that in Eq. \eqref{eq:dual-bounds}, the two-sided constraints in \eqref{eq:pij} and \eqref{eq:thermal} are equivalently represented using two one-sided constraints in canonical form. Doing so makes all the dual variables corresponding to those constraints non-negative and, hence, the upper bounds on the optimal dual variables for any value of $\bm x$ are non-negative. Now, we apply Lemma~\ref{lem:dual_bound} to the inner problem to obtain
\begin{subequations}
\begin{flalign}
& \begin{gathered}
 \eta^r(\boldsymbol x) = \min \sum_{i \in \mathcal N}  p_i^d \ell_i \, + \\
\sum_{(i,j) \in \mathcal E} \left\{ 
\begin{gathered}
 \bar \mu_{ij}^1(\bm x) [p_{ij} + \bm b_{ij} (\theta_i - \theta_j) + \bm Mx_{ij}]^-   \, +  \\ 
 \bar \mu_{ij}^2 (\bm x)[ p_{ij} + \bm b_{ij} (\theta_i - \theta_j) - \bm Mx_{ij}]^+  \, + \\ 
 \bar \pi_{ij}^1 (\bm x) [p_{ij} + \bm t_{ij}(1-x_{ij})]^- \,+\\
  \bar\pi_{ij}^2 (\bm x) [p_{ij} - \bm t_{ij}(1-x_{ij})]^+   
\end{gathered} 
\right\}
\end{gathered} \label{eq:inner-obj-r} & \\ 
& \text{subject to: Eqs \eqref{eq:kcl}, \eqref{eq:pg}, \eqref{eq:loadshed}, }& \notag \\ 
& - \bm M \leqslant p_{ij} +  \bm b_{ij} (\theta_i - \theta_j) \leqslant \bm M  \quad \forall (i,j) \in \mathcal E, \label{eq:pij-r} & \\
&  -\bm t_{ij} \leqslant p_{ij} \leqslant  \bm t_{ij}\quad \forall (i,j) \in \mathcal E. \label{eq:thermal-r} &
\end{flalign}
\label{eq:reformulation}
\end{subequations}
\textcolor{black}{In Eq. \eqref{eq:inner-obj-r}, $[z]^+$ and $[z]^-$ are the positive and negative parts of $z$, i.e., $[z]^+ = \max(z, 0)$ and $[z]^- = \max(-z, 0)$.} This reformulation in Eq. \eqref{eq:reformulation} is a variant of ``penalty methods'' used to solve non-linear optimization problems \cite{Bertsekas1999}. 
Since $x_{ij} \in \{0,1\}$, the objective in \eqref{eq:inner-obj-r} is equivalent to 
\begin{flalign}
& \begin{gathered}
 \eta^r(\boldsymbol x) = \min \sum_{i \in \mathcal N}  p_i^d \ell_i \, + \\
\sum_{(i,j) \in \mathcal E} \left\{ 
\begin{gathered}
 \bar \mu_{ij}^1(\bm x) [p_{ij} + b_{ij} (\theta_i - \theta_j)]^-  (1 - x_{ij}) \, +  \\ 
 \bar \mu_{ij}^2 (\bm x)[p_{ij} + b_{ij} (\theta_i - \theta_j)]^+  (1 - x_{ij}) \, + \\ 
 \bar \pi_{ij}^1 (\bm x) p_{ij}^-  x_{ij} +  \bar \pi_{ij}^2 (\bm x) p_{ij}^+  x_{ij} 
\end{gathered} 
\right\}
\end{gathered}  \label{eq:objective_reformulated}
\end{flalign}
\textcolor{black}{We detail the explanation for the equivalence between Eqs. \eqref{eq:inner-obj-r} and \eqref{eq:objective_reformulated} for the first term in Eq. \eqref{eq:inner-obj-r}, i.e., $\bar \mu_{ij}^1(\bm x) [p_{ij} + \bm b_{ij} (\theta_i - \theta_j) + \bm Mx_{ij}]^-$. When $x_{ij} = 0$ and $1$, $\bar \mu_{ij}^1(\bm x) [p_{ij} + \bm b_{ij} (\theta_i - \theta_j) + \bm Mx_{ij}]^-$ reduces to $\bar \mu_{ij}^1(\bm x) [p_{ij} + \bm b_{ij} (\theta_i - \theta_j)]^-$ and $\bar \mu_{ij}^1(\bm x) [p_{ij} + \bm b_{ij} (\theta_i - \theta_j) + \bm M]^-$, respectively. But we observe from Eq. \eqref{eq:pij-r} that, $p_{ij} + \bm b_{ij} (\theta_i - \theta_j) + \bm M$ is always $\geqslant 0$, i.e., $\bar \mu_{ij}^1(\bm x) [p_{ij} + \bm b_{ij} (\theta_i - \theta_j) + \bm M]^- = 0$, proving the equivalence. A similar argument holds for the rest of the terms in Eqs. \eqref{eq:inner-obj-r} and \eqref{eq:objective_reformulated}.}

Our proposed constraint generation approach is based on estimating the dual upper bounds $\bar \mu_{ij}^1(\bm x),  \bar \mu_{ij}^2(\bm x), \bar \pi_{ij}^1(\bm x) , \bar \pi_{ij}^2 (\bm x)$. A similar approach was used in \cite{cormican1998stochastic} where the authors used a \emph{constant} upper bound of $1$ for every dual variable; their approach is valid for the special case they consider where the inner problem is a standard network flow problem.
In this paper, we propose to use a piece-wise constant upper bound on one pair of dual variables, $\mu_{ij}^1(\bm x),  \mu_{ij}^2(\bm x)$ that is based on the observation in the forthcoming Lemma \ref{lem:zero_contribution}. For the other pair of dual variables $\pi_{ij}^1(\bm x) , \pi_{ij}^2 (\bm x)$, we propose using constant (both valid and heuristic) upper bounds that are computed using the load on the system. 
\begin{lemma}   \label{lem:zero_contribution}
For any interdiction plan $\bm x \in X$ where $X$  models a traditional, topological or spatial interdiction problem, the optimal dual values, $\mu_{ij}^1(\bm x)$ and $\mu_{ij}^2(\bm x)$ satisfy
\begin{align}   \label{eq:zeros}
    \mu_{ij}^1(\bm x) = \mu_{ij}^2(\bm x) = 0, \quad \mbox{whenever}\quad  x_{ij} = 1.
\end{align}
\end{lemma}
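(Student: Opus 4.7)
The plan is to use LP duality and complementary slackness applied to the inner problem $(\text{LS}(\bm x))$. The claim reduces to showing that when $x_{ij}=1$, the primal constraints \eqref{eq:pij-d1} and \eqref{eq:pij-d2} can be made strict at some primal optimum; complementary slackness then forces the paired optimal dual variables $\mu^1_{ij}(\bm x)$ and $\mu^2_{ij}(\bm x)$ to vanish. First I note that the thermal constraint \eqref{eq:thermal} with $x_{ij}=1$ collapses to $p_{ij}=0$ at every primal-feasible point, so \eqref{eq:pij-d1} and \eqref{eq:pij-d2} simplify to the two-sided angle bound $|\bm b_{ij}(\theta_i-\theta_j)|\leqslant \bm M$, a constraint purely on the phase-angle difference across the interdicted line.

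Next I exploit the fact that $\theta$ does not appear in the objective \eqref{eq:inner-obj}. For any optimal triple $(\ell^*,p^{g*},p^*)$ the admissible choices of $\theta$ are pinned only by the equalities $\theta_k-\theta_l = -p^*_{kl}/\bm b_{kl}$ on non-interdicted edges (these are what \eqref{eq:pij} reduces to when $x_{kl}=0$) plus two-sided angle bounds on interdicted edges. The equalities determine $\theta$ up to one additive constant per connected component of the non-interdicted subgraph. If $i$ and $j$ lie in distinct such components, I pick the additive constants so that $\theta_i=\theta_j$, giving $\bm b_{ij}(\theta_i-\theta_j)=0<\bm M$ strictly. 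If $i$ and $j$ lie in the same component, the difference is fixed by the path flows and bounded via
\[
|\bm b_{ij}(\theta_i-\theta_j)| \;\leqslant\; \bm b_{ij}\!\!\sum_{(k,l)\in P_{ij}}\!\! \bm t_{kl}\bm X_{kl},
\]
where $P_{ij}$ is any path of non-interdicted edges joining $i$ to $j$. This quantity should be compared to $\bm M = \sum_n \bm p_n^d$, noting that the total load is an upper bound on the aggregate magnitude of feasible power flows in the network. In either case, both \eqref{eq:pij-d1} and \eqref{eq:pij-d2} are strict at the chosen optimum, and complementary slackness yields $\mu^1_{ij}(\bm x)=\mu^2_{ij}(\bm x)=0$.

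The main obstacle is making the bound in the same-component case rigorously strict without data-dependent assumptions on the network parameters. A cleaner dual-side argument bypasses this step: the dual objective penalizes $\mu^1_{ij}+\mu^2_{ij}$ with coefficient $-\bm M x_{ij}=-\bm M<0$, while $\pi^1_{ij}+\pi^2_{ij}$ carries coefficient $-\bm t_{ij}(1-x_{ij})=0$. Any optimal dual with $\mu^1_{ij}$ or $\mu^2_{ij}$ strictly positive therefore admits a strict improvement by reallocating mass onto $\pi^1_{ij}$ or $\pi^2_{ij}$, while compensating shifts on adjacent $\mu$-variables preserve the dual balance equations from $\partial L/\partial p_{ij}=0$ and $\partial L/\partial \theta_i=\partial L/\partial \theta_j=0$. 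This contradicts optimality unless $\mu^1_{ij}(\bm x)=\mu^2_{ij}(\bm x)=0$, closing the argument.
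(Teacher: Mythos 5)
Your overall strategy---complementary slackness applied to the big-$\bm M$ constraints \eqref{eq:pij-d1}--\eqref{eq:pij-d2} after observing that $x_{ij}=1$ forces $p_{ij}=0$---is exactly the paper's. Your handling of the case where $i$ and $j$ end up in different components of the surviving network is correct and in fact more careful than the paper's own wording (which asserts that the extrema of $p_{ij}+\bm b_{ij}(\theta_i-\theta_j)$ are finite, a claim that fails verbatim when the interdiction separates $i$ from $j$; one small caveat is that your choice of component shifts must also respect the big-$M$ constraints of any \emph{other} interdicted lines straddling the same two components). The genuine gap is the same-component case, which you correctly flag and then do not close. Your bound $\bm b_{ij}\sum_{(k,l)\in P_{ij}}\bm t_{kl}\bm X_{kl}$ involves the reactance ratios $\bm X_{kl}/\bm X_{ij}$ and has no a priori relation to $\bm M=\sum_n\bm p_n^d$: for a low-reactance line bridging a high-reactance path the bound can exceed $\bm M$, the constraint can be tight at \emph{every} optimal primal point, and complementary slackness then yields nothing. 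The paper escapes this not by proving the inequality for $\bm M=\sum_n\bm p_n^d$ but by invoking the freedom to choose $\bm M$ ``sufficiently large''---i.e., the lemma is really a statement about the formulation with $\bm M$ chosen to dominate precisely the path-reactance quantity you computed, uniformly over the finitely many plans in $X$. Had you stated and used that freedom, your first argument would be complete.

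The fallback dual-side argument does not rescue the same-component case. Decreasing $\mu^1_{ij}$ while increasing $\pi^1_{ij}$ preserves the dual equality arising from $\partial L/\partial p_{ij}$, but it breaks the two equalities arising from $\partial L/\partial\theta_i$ and $\partial L/\partial\theta_j$, in which $\mu^1_{ij}$ enters with coefficient $\pm\bm b_{ij}$ and $\pi^1_{ij}$ does not enter at all. Repairing those requires perturbing $\mu$-variables on neighboring lines, which in turn disturbs the $\theta$-equalities at their far endpoints, and so on through the whole network. You assert, but do not establish, that a globally feasible rebalancing exists that keeps all duals nonnegative and all remaining dual inequalities (those paired with $\ell_i$ and $p^g_i$) satisfied. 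Exhibiting such a feasible dual ``circulation'' is essentially equivalent to the statement being proved, so as written the second argument is circular rather than a patch for the first.
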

\begin{proof}
The proof follows by applying the complementary slackness condition to the constraint in Eq. \eqref{eq:pij-d1} and \eqref{eq:pij-d2}. Whenever $x_{ij} = 1$, the constraints reduce to 
\begin{subequations}
\begin{flalign}
p_{ij} + \bm b_{ij} (\theta_i - \theta_j)\geqslant - \bm M  \label{eq:pij-d1-x} \\
-p_{ij} - \bm b_{ij} (\theta_i - \theta_j) \geqslant - \bm M \label{eq:pij-d2-x}
\end{flalign}
\label{eq:pij-duals-x}
\end{subequations}
Complementary slackness conditions, when applied to constraint in Eq. \eqref{eq:pij-d1-x}, state that when the constraint is not tight, i.e., when $p_{ij} + \bm b_{ij} (\theta_i - \theta_j) > - \bm M$, the corresponding dual variable's value is zero ($\mu_{ij}^1(\bm x) = 0$). A similar statement is also true for the constraint in Eq. \eqref{eq:pij-d2-x}. Now, we observe that the value of $\bm M$ in Eq. \eqref{eq:pij} can always be made sufficiently large so that the constraints in Eq. \eqref{eq:pij-duals-x} are never tight. This observation is clear from the fact that the maximum and minimum value of $p_{ij} + \bm b_{ij} (\theta_i - \theta_j)$ when $x_{ij}=1$ is finite. Therefore, the corresponding dual variables values must always be zero for that chosen value of $\bm M$.
\end{proof}

Exploiting the property in Lemma~\ \ref{lem:zero_contribution},  the objective in  Eq. \eqref{eq:objective_reformulated} now reduces to 
$$\eta^r(\boldsymbol x) = \min \sum_{i \in \mathcal N}  \bm p_i^d \ell_i \, + \sum_{(i,j) \in \mathcal E} (\bar \pi_{ij}^1(\bm x)  p_{ij}^-   +  \bar \pi_{ij}^2(\bm x)  p_{ij}^+  ) \cdot x_{ij} $$
Choosing constant dual upper bound values of $(\bar{\pi}_{ij}^1, \bar{\pi}_{ij}^2)$ for $(\bar{\pi}_{ij}^1(\bm x), \bar{\pi}_{ij}^2(\bm x))$, respectively, we obtain the following equivalent formulation of the full interdiction problem:
\begin{subequations}
\begin{flalign}
& (\mathcal F_1) \quad  \max_{\bm x \in X} \; \eta \quad \text{subject to:} & \\ 
& \eta \leqslant \sum_{i \in \mathcal N} \bm p_i^d \ell_i(\bm x) + \sum_{(i,j) \in \mathcal E} (\bar \pi_{ij}^1  p_{ij}^-(\bm x)   +  \bar \pi_{ij}^2  p_{ij}^+(\bm x)  )  \cdot x_{ij} \label{eq:constraints} & 
\end{flalign}
\label{eq:new-model}
\end{subequations}
where, $\ell_i(\bm x)$, $p_{ij}^+(\bm x)$, and $p_{ij}^-(\bm x)$ are the optimal load shed values, $i\in \mathcal N$, and the positive and negative parts of the active power flow on each transmission line of the network for a fixed $N$-$k$ attack defined by $\bm x$. 

\subsection{Constraint generation algorithm}
The formulation in Eq. \eqref{eq:new-model} is solved using a constraint generation algorithm that alternates between solving the outer maximization problem in Eq. \eqref{eq:outer-obj} and the inner problem LS$(\bm x)$. Constraints of the form \eqref{eq:constraints} are generated at each iteration. It first relaxes all the constraints in Eq. \eqref{eq:constraints} from \eqref{eq:new-model} and solves the resulting problem to obtain an upper bound to the optimal objective value of the interdiction problem. The solution is then used to solve the inner problem i.e., a minimum load shedding problem with the components in the current solution removed. The optimal load shedding factors and active power flow on each line are then used to add a cut of the form \eqref{eq:constraints}.
The constraint-generation algorithm generates the constraints in \eqref{eq:constraints} dynamically until an optimality tolerance is reached. At each iteration of the constraint-generation algorithm, in addition to the constraint in step \ref{step:master} of Algorithm \ref{algo:pseudocode}, we also add the following no-good cut in Eq. \eqref{eq:no-good}. These no-good cuts eliminate the selection of the same interdiction plan in the outer problem after it is selected once. The cuts have been observed to improve the convergence behavior of the algorithm for the traditional interdiction problems \cite{Salmeron2009}.   
\begin{flalign}
\sum_{(i,j) \in \mathcal E} \hat{x}_{ij} \cdot x_{ij} \leqslant k - 1. \label{eq:no-good}
\end{flalign}
For clarity, pseudo-code of the algorithm is given in Algorithm \ref{algo:pseudocode}.
\begin{algorithm}
\onehalfspacing
\caption{Constraint-generation algorithm}\label{algo:pseudocode}
\begin{algorithmic}[1]
\vspace{1ex}
\Input optimality tolerance, $\varepsilon > 0$
\Output $\bm x^* \in X$, an $\varepsilon$-optimal $N$-$k$ attack
\State initial problem: $\mathcal F_{1}$ without constraint \eqref{eq:constraints}
\State $\eta^{*} \gets -\infty$ \Comment{lower bound on the optimal obj. value}
\State $\eta^{u} \gets +\infty$ \Comment{upper bound on the optimal obj. value} 
\State $\hat{\bm x}$ any initial $N$-$k$ attack
\State \label{step:inner}solve inner problem for $\hat{\bm x}$ 
\State $\eta(\hat{\bm x}) \gets$  the load shed for the attack defined by $\hat{\bm x}$
\State $\ell_i(\hat{\bm x}) \gets $ active load shed factor for $\hat{\bm x}$ and $\forall i \in \mathcal N$
\State $p_{ij}(\hat{\bm x}) \gets$ active power through line $(i,j) \in \mathcal E$
\If{$\eta(\hat{\bm x}) > \eta^*$} $\eta^* \gets \eta(\hat{\bm x})$ and ${\bm x}^* \gets \hat{\bm x}$ 
\EndIf
\State \label{step:master}add $\eta \leqslant \eta(\hat{\bm x}) + \sum_{(i,j) \in \mathcal E}  (\bar \pi_{ij}^1  p_{ij}^+(\hat{\bm x})   +  \bar \pi_{ij}^2  p_{ij}^-(\hat{\bm x})  )\cdot x_{ij}$ to $\mathcal F_{1}$ and resolve the outer problem
\State update $\hat{\bm x}$, and set $\eta^u$ using solution from Step \ref{step:master}
\If{$\eta^u - \eta^* \leqslant \varepsilon \eta^*$} $(\bm x^*, \eta^*)$ is the $\varepsilon$-optimal solution to the interdiction problem, stop  
\EndIf
\State Add no-good cut \eqref{eq:no-good}
\State return to step: \ref{step:inner}
\end{algorithmic}
\end{algorithm}

\subsection{Choosing bounds on the dual variables} \label{subsec:dual-bounds}
We also note that tight upper bounds on the optimal dual values over all $N$-$k$ attacks i.e., $\bar{\pi}_{ij}^1$ and $\bar \pi_{ij}^2$ for each line $(i,j) \in \mathcal E$ are essential to obtain reasonable convergence behavior. A trivial and valid value for the dual upper bounds for any line $(i,j) \in \mathcal E$ is the total load in the system i.e., $\bar{\pi}_{ij}^1 = \bar{\pi}_{ij}^2 = \sum_{i \in \mathcal N} \bm p_i^d$. Intuitively, the tightest value of the dual upper bounds specify the minimum amount of load that can be served by increasing the thermal limit of the corresponding line by one unit over all $N$-$k$ attacks. Given this interpretation, the total load is a very conservative choice for $\bar{\pi}_{ij}^1$ and $\bar{\pi}_{ij}^2$. In the subsequent paragraphs, we focus on (i) finding lines for which $\bar{\pi}_{ij}^1 = 0$ or $\bar{\pi}_{ij}^2 = 0$ and (ii) heuristic choices of dual upper bounds. 

Let $\bar{\mathcal E}$ denote the lines $(i,j) \in \mathcal E$ in the transmission network that are never congested (i.e., constraint \eqref{eq:thermal-r} is never tight) for any feasible interdiction plan $\bm x \in X$. Then, for any $(i,j) \in \bar{\mathcal E}$, we know that $\bar{\pi}_{ij}^1 = \bar{\pi}_{ij}^2 = 0$. The lines for which $p_{ij} < \bm t_{ij}$ for any $\bm x \in X$ have $\bar{\pi}_{ij}^1 = 0$ and lines that always satisfy $p_{ij} > -\bm t_{ij}$ for any $\bm x \in X$, $\bar{\pi}_{ij}^2 = 0$. It remains to compute the lines that satisfy these conditions for any interdiction plan. To that end, for each line, we solve the following pair of linear programs which aid in finding the lines that are never congested for any feasible $\bm x \in X$:
\begin{subequations}
\begin{flalign}
& \max \pm p_{ij} \quad \text{subject to:} & \label{eq:lp-obj} \\ 
& \text{Eq. \eqref{eq:kcl} -- \eqref{eq:loadshed}, $\bm x \in X$ and $0 \leqslant x_{ij} \leqslant 1$.} & \notag 
\end{flalign}
\label{eq:lp}
\end{subequations}
If the maximum value of $p_{ij}$ ($-p_{ij}$) is less than $t_{ij}$, then $\bar{\pi}_{ij}^1$ ($\bar{\pi}^2_{ij}$) takes a value zero i.e., the line is never congested in any interdiction plan $\bm x \in X$. For the remaining lines, we set $\bar{\pi}_{ij}^1 = \bar{\pi}_{ij}^2 = \sum_{i \in \mathcal N} \bm p_i^d$. This choice of dual upper bounds can still be weak for systems with a large total load value. Hence, we also examine heuristic choices of the dual upper bounds. In particular, we use $\bar{\pi}_{ij}^1 = \bar{\pi}_{ij}^2 = 1$ for all lines $(i,j) \in \mathcal E$. This is a valid bound for the transportation model, i.e., the inner problem LS$(\bm x)$ without the constraints \eqref{eq:pij}. Intuitively, these upper bounds have the following interpretation: if a line $(i,j) \in \mathcal E$ is removed from a network, then the maximum amount of load that is shed is the absolute value of the power flowing on the line. This is not always true because of Braess' paradox \cite{Salmeron2009}. Nevertheless, if the total load shed induced by the removal of any subset of lines $\hat{\mathcal E}$ in the network does not incur more than $\sum_{(i,j) \in \mathcal E} |p_{ij}|$ load shed, then $\bar{\pi}_{ij}^1 = \bar{\pi}_{ij}^2 = 1$ is a valid set of upper bounds for the optimal dual variables over all feasible interdiction plans $\bm x \in X$. Indeed, we see in our experiments in the next section that the choice $\bar{\pi}_{ij}^1 = \bar{\pi}_{ij}^2 = 1$ is valid for the test cases we consider.

\section{Case Studies} \label{sec:results}
In this section, we present two case studies to demonstrate the computational effectiveness of the constraint-generation algorithm in computing an optimal $N$-$k$ interdiction plan. The case studies are performed on two PGLib-OPF v18.08 \cite{pglib} API test cases: the IEEE single-area RTS96 with 24 buses and the geolocated WECC 240 with 240 buses. The IEEE single-area RTS96 \cite{Grigg1999} test system is artificially geolocated in the state of Utah. The $k$ values for both the spatial and the topological interdiction problems are varied from 2 to 6. All of the interdiction formulations and algorithms were implemented in Julia v1.1 using the optimization modeling layer JuMP.jl v0.18 \cite{Dunning2017} and PowerModels v0.8 \cite{PowerModels}. Furthermore, an optimality tolerance of $\varepsilon = 1\%$ was used as a termination criteria. The code and the data used for the case studies are open-sourced and available in the repository \url{https://github.com/kaarthiksundar/resource-constrained-nk}. The computational experiments on the aforementioned transmission systems are aimed at corroborating the following aspects: (i) the effectiveness of the constraint-generation algorithm in computing optimal solutions to the spatially and topologically resource-constrained interdiction problems (ii) the choice of the dual bounds in Sec. \ref{subsec:dual-bounds} on the convergence behaviour of the algorithms, and finally (iii) the value of modeling a spatial- and topological-constrained adversary as opposed to an traditional adversary in the context of interdiction problems.

\subsection{Computational performance and choice of dual bounds}
Fig. \ref{fig:iter} shows the number of iterations the constraint-generation algorithm needs to converge to an $\varepsilon$-optimal solution on the RTS96 single-area system with valid and heuristic dual upper bounds for the spatial and topological $N$-$k$ interdiction problems. 
%The heuristic dual upper bound is not guaranteed to yield the optimal solution, but empirically gets very good solutions and we use this method for the remaining results. In Fig. \ref{fig:iter} the results based on using the heuristic for computing the upper bound match the results that use a valid upper bound. 
%We remark that the optimal load shed obtained by using each of the dual bound values was the same and hence, is not presented. 
For this set of experiments on the RTS96 test system $D$ was set to $10$ km. From Fig. \ref{fig:iter}, it is clear that the computational performance of the algorithm is superior when the heuristic dual upper bounds are used. Despite the lack of a proof of validity for these bounds, they still yield optimal solutions to the interdiction problems. Hence, throughout the rest of the paper, heuristic dual upper bounds are used for all experiments. 
\begin{figure}[h!]
\centering
\begin{tikzpicture}[scale=0.8]
    \begin{axis}[
    xlabel=$k$, 
    ylabel=\text{itertions}, 
    legend style={at={(1,1.2)}, column sep=5pt, draw=none, legend columns=2, /tikz/column 2/.style={
                column sep=6pt,}},
    ymode=log]
    
    \addplot[only marks,color=red,mark options={scale=1.5,fill=white},mark=*] 
    plot coordinates {
        (2,320) (3,1740) (4,2034) (5,2335) (6,2950) 
    };
    \addplot[only marks,color=red,mark options={scale=1.5,opacity=0.5},mark=*] 
    plot coordinates {
        (2,20) (3,19) (4,14) (5,12) (6,14) 
    };
\addplot[only marks,color=blue,mark options={scale=1.5,fill=white}, mark=square*] 
plot coordinates {
        (2,77) (3,179) (4,454) (5,1089) (6,1205) 
    };
\addplot[only marks,color=blue,mark options={scale=1.5,opacity=0.5},mark=square*] 
plot coordinates {
        (2,11) (3,11) (4,19) (5,11) (6,35)
    };
    \legend{Spatial (V),Spatial (H),Topological (V), Topological (H)}
    
    \end{axis}
\end{tikzpicture}
\caption{Number of iterations required to converge to an $\varepsilon$-optimal solution with valid (V) and heuristic (H) dual upper bounds for the spatial and the topological $N$-$k$ interdiction problem.}
\label{fig:iter}
\end{figure}
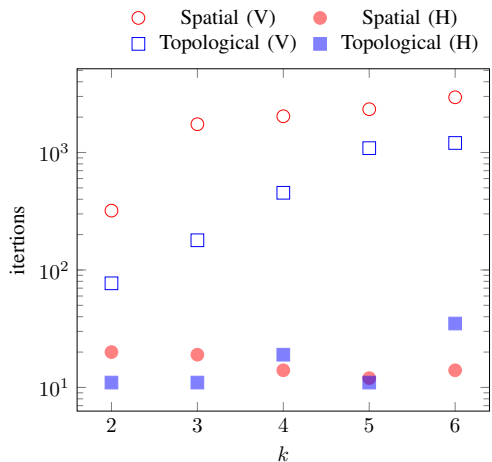

\begin{table}[htbp]
    \caption{Results comparing the different interdiction models on the IEEE single-area RTS96 test system. As $k$ gets larger, the effect of constraining the attacker grows considerably. The load shed values obtained using a traditional approach for determining the interdiction can be as much as 50\% more than a spatially constrained attacker.}
    \label{tab:96}
    \centering
    \begin{tabular}{crrrr}
        \toprule
        \multirow{2}{*}{$k$} & load shed & time  & \multirow{2}{*}{iterations} & opt. gap  \\
        & (p.u) & (sec.) & & (\%) \\
        \midrule
        \multicolumn{5}{l}{Traditional $N$-$k$ interdiction} \\[0.5ex]
        2 & 4.0 & 4.42 & 21 & 0.00 \\
        3 & 7.37 & 3.45 & 15 & 0.50 \\
        4 & 11.05 & 3.43 & 11 & 0.33 \\
        5 & 14.21 & 2.96 & 10 & 0.86 \\
        6 & 15.96 & 3.43 & 13 & 0.00 \\
        \midrule
        \multicolumn{5}{l}{Spatial $N$-$k$ interdiction} \\[0.5ex]
        2 & 4.0 & 2.76 & 20 & 0.00 \\
        3 & 6.3 & 2.56 & 20 & 0.00 \\
        4 & 8.0 & 2.44 & 19 & 0.70 \\
        5 & 9.59 & 2.97 & 25 & 0.00 \\
        6 & 11.0 & 2.38 & 19 & 0.00 \\
        \midrule
        \multicolumn{5}{l}{Topological $N$-$k$ interdiction} \\[0.5ex]
        2 & 4.0 & 4.39 & 11 & 0.00 \\
        3 & 6.29 & 4.10 & 11 & 0.24 \\
        4 & 7.72 & 5.31 & 19 & 0.00 \\
        5 & 11.05 & 4.85 & 11 & 0.00 \\
        6 & 11.05 & 20.09 & 35 & 0.00 \\
         \bottomrule
    \end{tabular}
\end{table}

Table \ref{tab:96} shows the $\varepsilon$-optimal load shed values, computation time, number of iterations, and the relative optimality gap for the traditional, spatial (with $D=10$ km) and topological interdiction problems for varying values of $k$. Table \ref{tab:240} shows the same for the WECC 240 test system. These tables corroborates the effectiveness of the algorithm in computing $\varepsilon$-optimal solutions with very little computational effort.  

\subsection{Value of realistic attacker models}
% We now present the results of the spatial and topological interdiction problems on the WECC 240 test system.
% Table \ref{tab:240} gives the load shed, computation time, the number of iterations, and the relative optimality gap for the traditional, spatial (with $D = 500$ km), and topological versions of the $N$-$k$ interdiction problems.
From Tables \ref{tab:96} and \ref{tab:240} we observe that the load shed obtained using the traditional $N$-$k$ interdiction formulation is much higher than those obtained from the spatial and the topological interdiction problems. This is because the traditional $N$-$k$ permits the attacker to interdict components in the system that are geographically far apart in the transmission system. This, however, is less realistic and, as shown by the results, can significantly overestimate the required load shed values.
\begin{table}[htbp]
    \caption{
    Results comparing the different interdiction models on the WECC 240 test system. As $k$ gets larger, the effect of constraining the attacker grows considerably. The traditional approach for determining the interdiction can be almost 200\% more than a constrained attacker. Interestingly, constraining the attacker also has the side effect of reducing the number of iterations that are required for convergence.}
    \label{tab:240}
    \centering
    \begin{tabular}{crrrr}
        \toprule
        \multirow{2}{*}{$k$} & load shed & time  & \multirow{2}{*}{iterations} & opt. gap  \\
        & (p.u) & (sec.) & & (\%) \\
        \midrule
         \multicolumn{5}{l}{Traditional $N$-$k$ interdiction} \\[0.5ex]
        2 & 219.19 & 3.71 & 14 & 0.00 \\
        3 & 331.8 & 4.9 & 19 & 0.00 \\
        4 & 418.89 & 4.64 & 16 & 0.06 \\
        5 & 482.22 & 7.48 & 24 & 0.80 \\
        6 & 556.65 & 5.63 & 18 & 0.77 \\
        \midrule
        \multicolumn{5}{l}{Spatial $N$-$k$ interdiction} \\[0.5ex]
        2 & 192.22 & 3.03 & 5 & 0.00 \\
        3 & 222.65 & 3.59 & 7 & 0.00 \\
        4 & 233.99 & 5.22 & 11 & 0.57 \\
        5 & 255.73 & 4.98 & 10 & 0.94 \\
        6 & 273.68 & 7.15 & 16 & 0.0 \\
        \midrule
        \multicolumn{5}{l}{Topological $N$-$k$ interdiction} \\[0.5ex]
        2 & 121.26 & 5.52 & 5 & 0.40 \\
        3 & 211.26 & 5.52 & 4 & 0.00 \\
        4 & 222.49 & 9.72 & 5 & 0.88 \\
        5 & 233.4 & 41.64 & 12 & 0.00 \\
        6 & 332.03 & 21.27 & 6 & 0.00 \\
         \bottomrule
    \end{tabular}
\end{table}

This phenomenon is demonstrated in the spatial interdiction model by examining the impact of parameter $D$ on the load shed values. To that end, we let $D$ take any value from $100$km to $1000$km in steps of $100$km. Fig. \ref{fig:load-shed} shows the load shed values for various values of $D$ and $k$. The load shed for fixed values of $k$ quickly increases with $D$ (and will converge to the load-shed value for the traditional $N-k$ formulation when $D$ is sufficiently large), thus establishing the importance of a more accurate attacker model. Fig. \ref{fig:iterations} shows the number of iterations needed by the constraint-generation algorithm to converge to an $\varepsilon$-optimal solution for the spatial $N$-$k$ interdiction problem. The plot suggests that the difficulty of the problem does not increase or decrease substantially with the value of the parameter $D$ and for a fixed value of $k$. 
\begin{figure}
    \centering
    \includegraphics[scale=0.7]{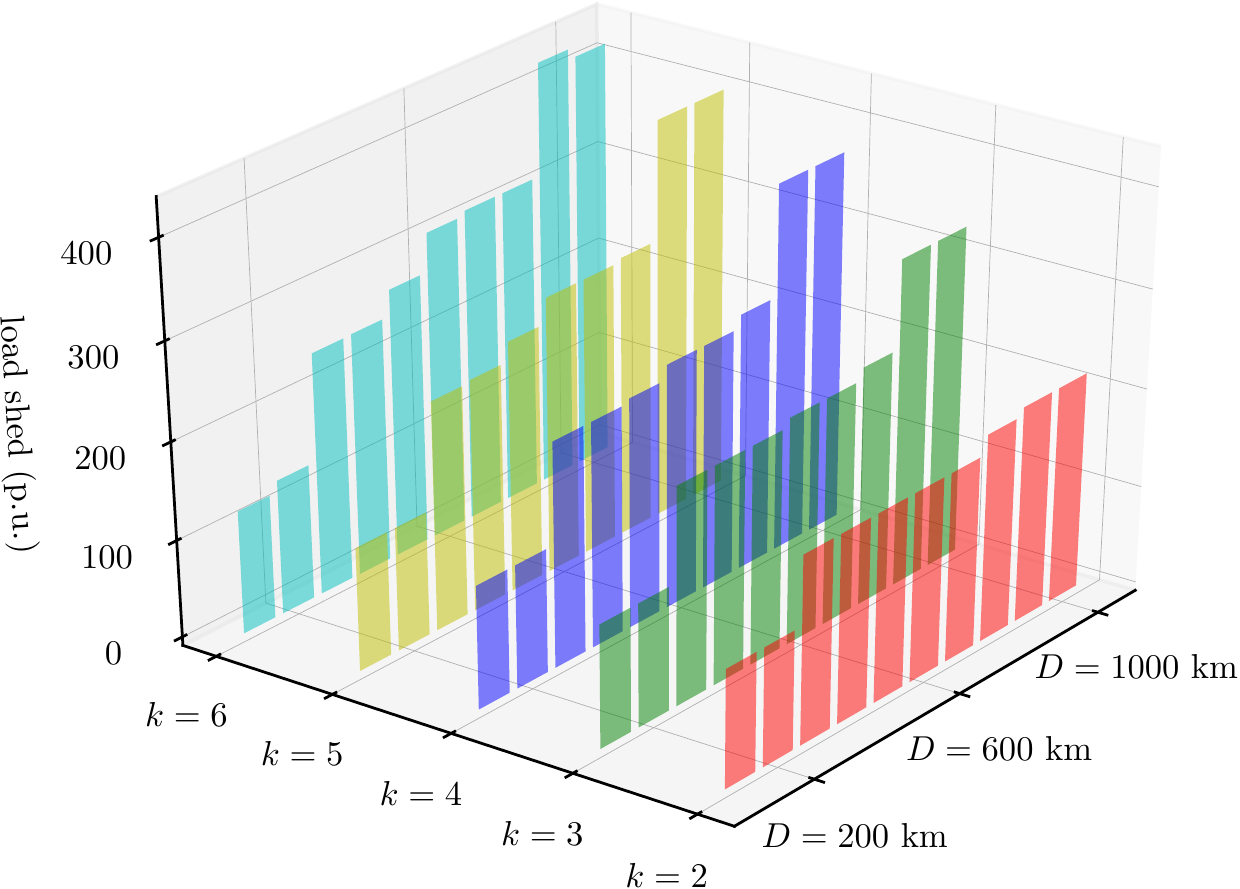}
    \caption{Load shed, in p.u., for the spatial interdiction problem for varying values of $k$ and $D$ on the WECC 240 test system.}
    \label{fig:load-shed}
\end{figure}

\begin{figure}
    \centering
    \includegraphics[scale=0.7]{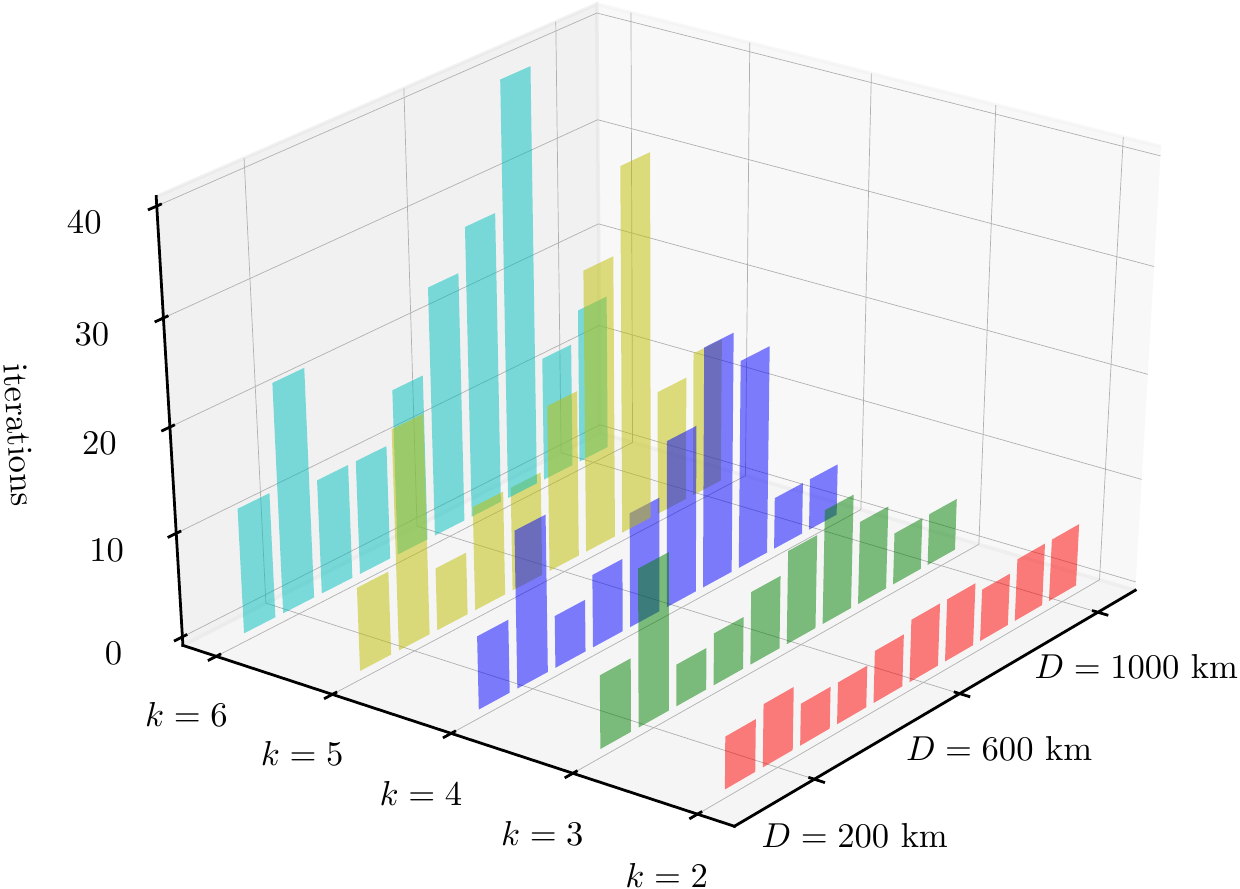}
    \caption{Number of iterations of the constraint-generation algorithm for the spatial interdiction problem for varying values of $k$ and $D$ on the WECC 240 test system.}
    \label{fig:iterations}
\end{figure}

The final set of plots in Fig. \ref{fig:maps} shows locations where the load is shed for the WECC 240 test system by the traditional  (see Fig. \ref{fig:traditional}), spatial with $D = 500$ km (see Fig. \ref{fig:spatial}), and topological (see Fig. \ref{fig:top}) interdiction models, respectively, for $k=6$. As observed from Fig. \ref{fig:maps}, the traditional interdiction problem has the ability to compute $N$-$k$ attacks where the components interdicted have a large geographical separation. As a result, the attack produces a load shed in geographically different parts of the system. Both the spatial and the topological $N$-$k$ models result in $N$-$k$ attacks where the load shed is more localized to a particular region indicative of the fact that the topological $N$-$k$ model serves as a proxy for the spatial interdiction problem in the absence of spatial grid data. 

\textcolor{black}{
\subsection{Comment on computation time for large-scale systems}
We first remark that the spatial $N$-$k$ interdiction problem with a large value of $D$ implies that the attacker is getting more powerful and can start interdicting components in the network which are geographically well-separated from each other. This is in opposition to the paper's motivation on computing the worst-case credible $N$-$k$ interdiction plan. Hence, in this scenario, we would advise the readers to resort to the traditional $N$-$k$ interdiction problem which does not face any computational issues for large-scale systems. Nevertheless, if we assume that a credible attacker can attack a system with a large $D$ value the issue that the algorithm would run into is redundancy in the solution for systems larger than the ones considered in this paper. For a sufficiently large $D$ and small $k$ value, there can be multiple interdiction locations for the same $N$-$k$ interdiction plan causing a sufficient increase in the computation time for convergence of the cutting-plane algorithm. The cutting-plane approach as presented in Algorithm \ref{algo:pseudocode} can easily tackle this redundancy using the no-good cut in Eq. \eqref{eq:no-good}, that is added at each iteration of the algorithm. This constraint forces any new $N$-$k$ interdiction plan to differ from interdiction plan chosen in a previous iteration by at least one component. }

\begin{figure*}
 \centering
  \subfloat[Load shed = 556.65 p.u.\label{fig:traditional}]{%
       \includegraphics[scale=0.15,cfbox=black 1pt 1pt]{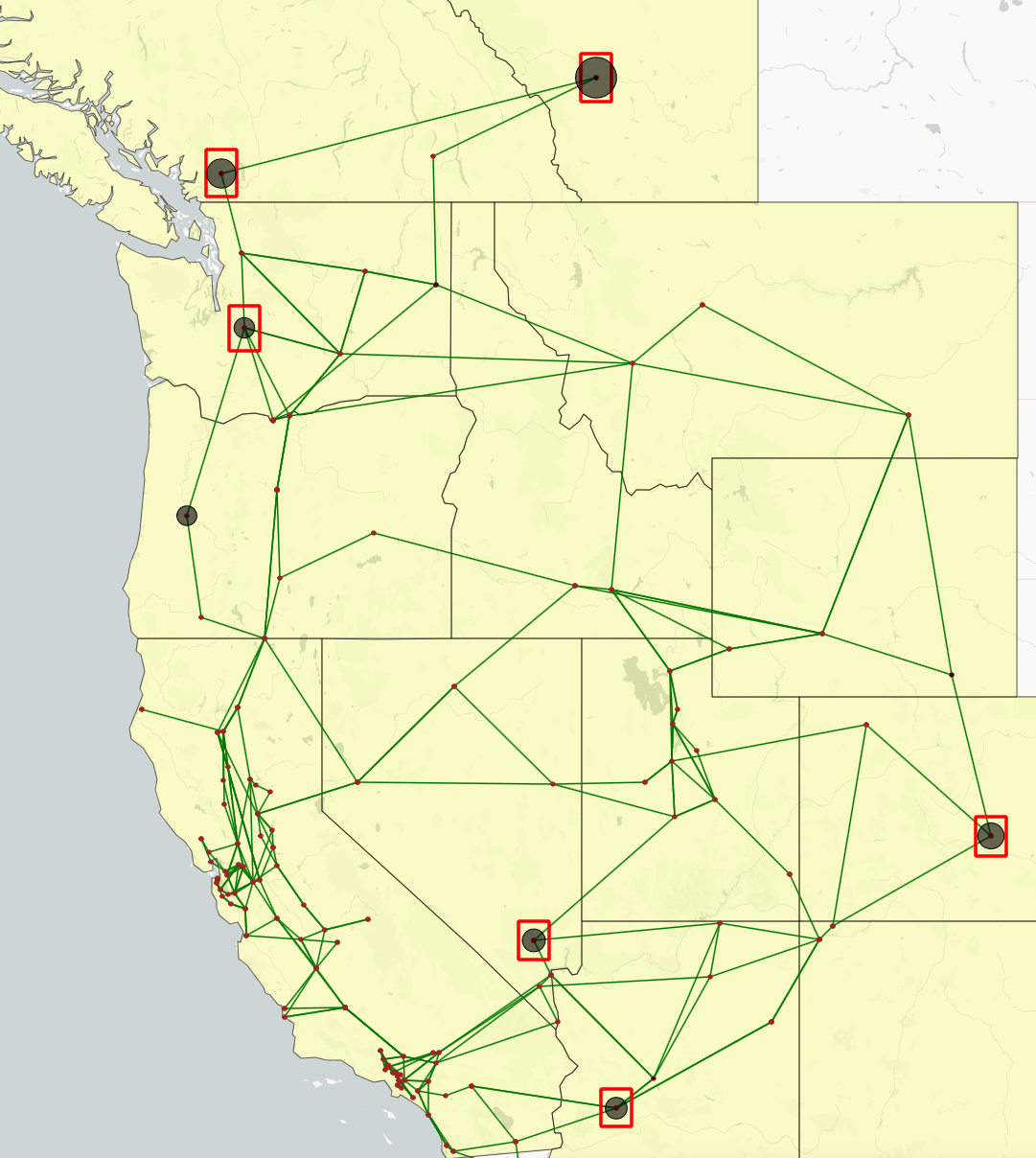}}
    \hfill
  \subfloat[Load shed = 273.68 p.u\label{fig:spatial}]{%
        \includegraphics[scale=0.15,cfbox=black 1pt 1pt]{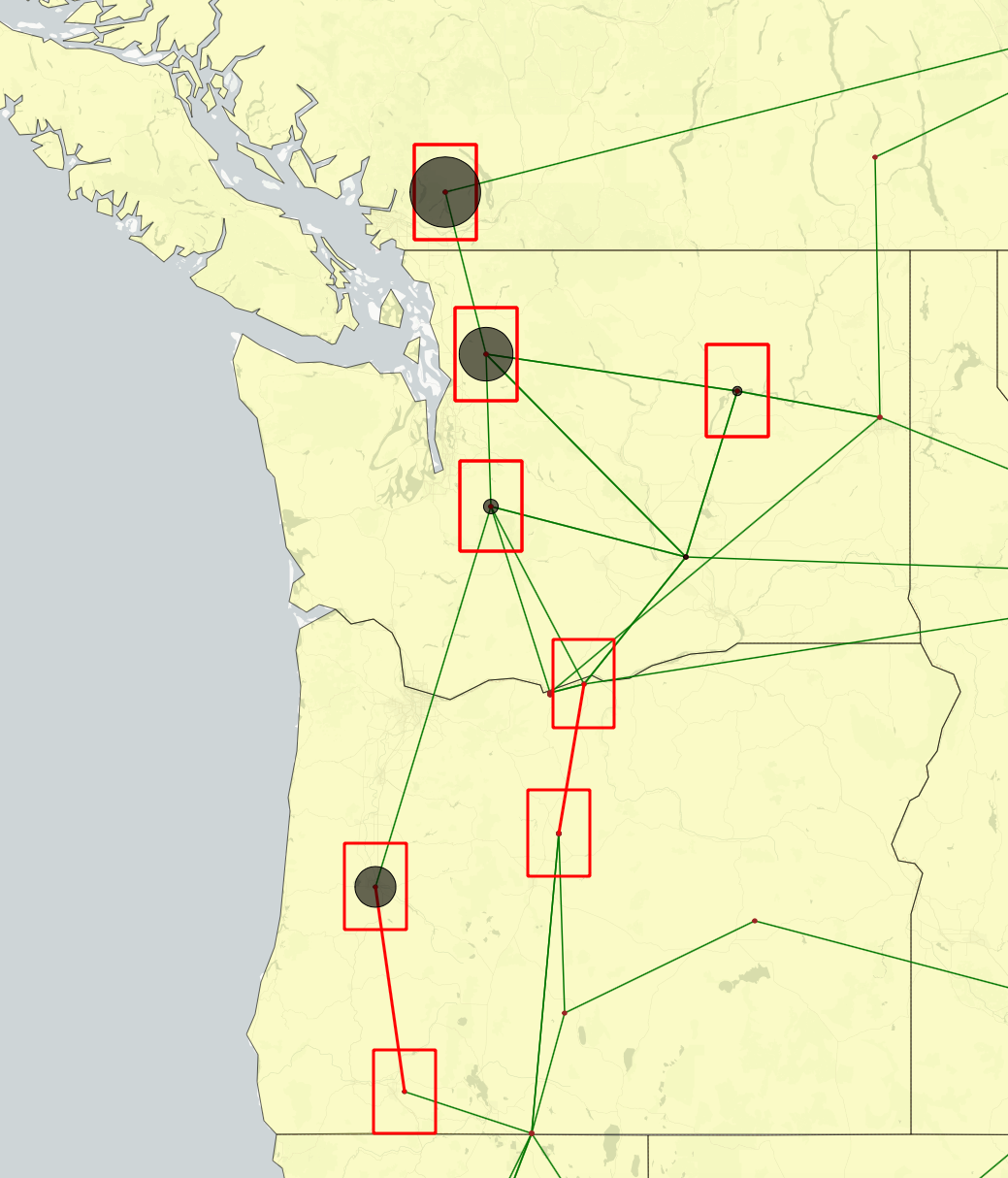}}
    \hfill
  \subfloat[Load shed = 332.03 p.u.\label{fig:top}]{%
        \includegraphics[scale=0.15,cfbox=black 1pt 1pt]{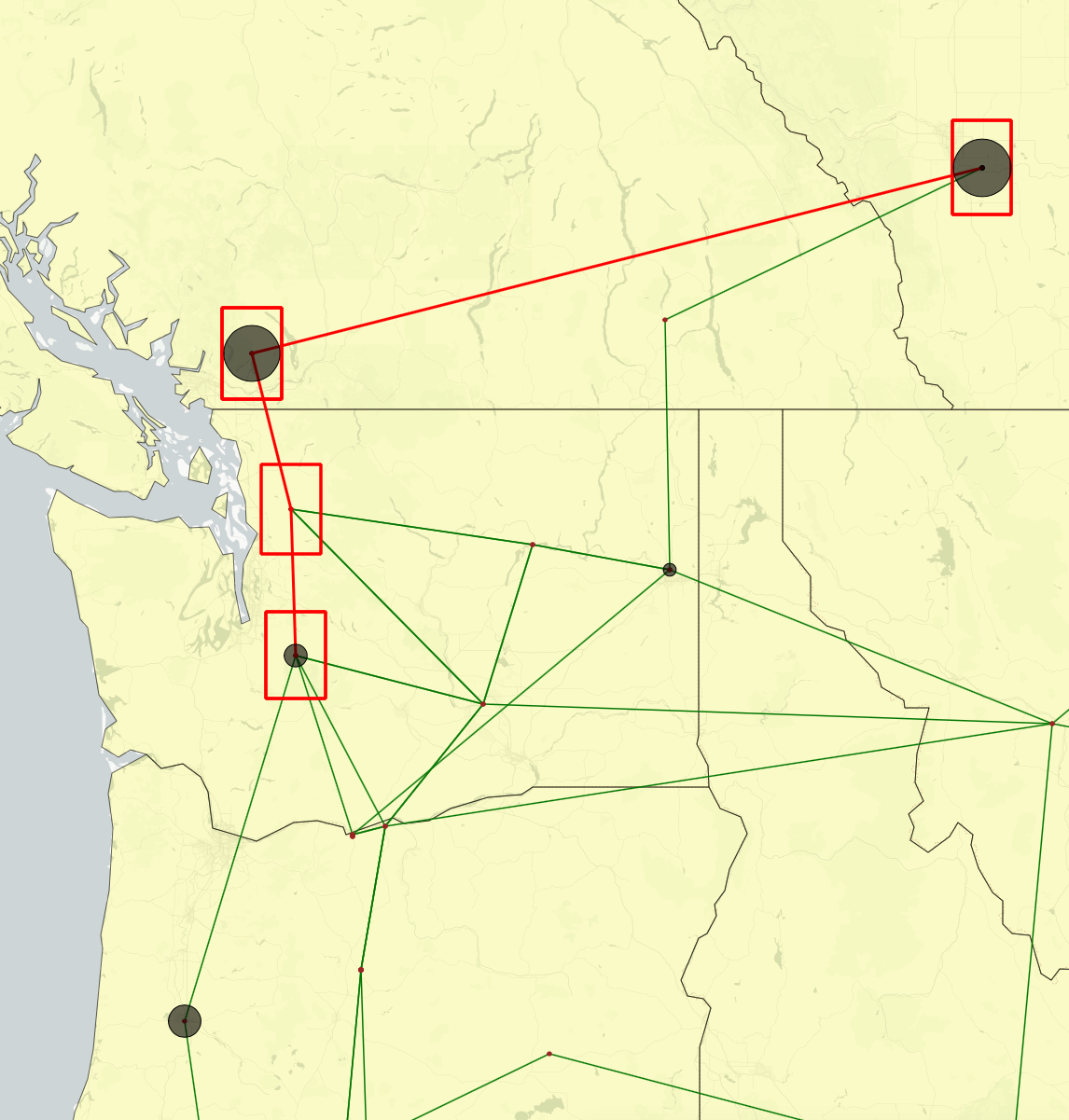}}
  \caption{Map of the buses where the load is shed for the (a) traditional, (b) spatial, and (c) topological interdiction problems with $k=6$. The radius of the each of the circles centered around a bus represents the percentage of total load shed in that bus. The interdicted lines are shown in red. All the buses on which the interdicted lines are incident on are highlighted using red boxes. We also remark that some of the interdicted lines in all the three figures connect buses that are very close to each other and hence is not highlighted in red, and hence the reason for highlighting the buses on which the lines are incident on. For the spatial interdiction problem, the value of $D$ was set to $500$ km.}
  \label{fig:maps} 
\end{figure*}

\section{Conclusion}
This paper presents two $N$-$k$ interdiction models 
that constrain the capabilities of the attacker to better model localized interdiction. These models use spatial proximity and topological connectivity for formulate localized interdiction.
A general constraint-generation algorithm was developed to handle a broad class of interdiction problems and was demonstrated these localized interdiction formulations and the traditional interdiction models. Case studies on the IEEE RTS 96 and WECC 240 systems show the computational effectiveness of the algorithm and the effectiveness of the model in identifying attacks that are localized. 

There are a number of potentially interesting future directions for this work. First, new models of constraining the attack model to better reflect the capabilities and outcomes of extreme events, such as those based on stochastic $N$-$k$ models could be developed \cite{Sundar2018}. Second, these localized models could be connected to AC power flow models, such as the convex relaxations developed in  \cite{hijazi2017convex}, to improve the realism of the chosen attacks. 
Third, interdiction models typically use steady-state models that neglect transient responses to failures in power systems. It would be interesting to develop computationally tractable models of interdiction that model the dynamics of power systems. Fourth, the interdiction literature models attackers with complete information about the transmission system and the defender's response and developing models where the attacker has partial or incomplete information of the transmission system and the defender's response is another avenue of future work.
Finally, it would also be interesting to develop methods for deriving stronger valid bounds on the dual variables.

\section*{Acknowledgements} 
The  work  was  funded  by  the U.S. Department of Energy (DOE) Grid Modernization Laboratory Consortium project \textit{Extreme Event Modeling}. Los Alamos National Laboratory is operated by Triad National Security, LLC, for the National Nuclear Security Administration of DOE (Contract No. 89233218CNA000001) under the auspices of the NNSA of the U.S. DOE at LANL under Contract No. DE-AC52-06NA25396.

\bibliographystyle{plain}
\bibliography{references.bib}

\end{document}